\definecolor{box_color}{rgb}{.8,.8,.8}
\newtheorem{lemma}{Lemma}
\newtheorem{proposition}{Proposition}
\newtheorem{corollary}{Corollary}
\newtheorem{fact}{Fact}
\newtheorem{definition}{Definition}
\newtheorem{remark}{Remark}
\newtheorem{assumption}{Assumption}
\newtheorem{ass}{C.}
\newenvironment{proof}[1][Proof]{\begin{trivlist}
\item[\hskip \labelsep {\bfseries #1}]}{\end{trivlist}}
\def\begcen{\begin{center}}
\def\endcen{\end{center}}
\newcommand{\bfp}{\mbox{$p$}}
\def\frp{\mathfrak{p}}
\newcommand{\RE}{\mathbb {R}}    
\newcommand{\col}{ \mbox{col} }
\def\calm{{\cal M}}
\def\calV{{\cal V}}
\def\calG{{\cal G}}
\def\calH{{\cal H}}
\def\hal{{1 \over 2}}
\def\L2{{\cal L}_2}
\def\L2e{{\cal L}_{2e}}
\def\rea{\mathbb{R}}
\def\diag{\mbox{diag}}
\def\half{\frac{1}{2}}
\def\begequarr{\begin{eqnarray}}
\def\endequarr{\end{eqnarray}}
\def\begequarrs{\begin{eqnarray*}}
\def\endequarrs{\end{eqnarray*}}
\def\begarr{\begin{array}}
\def\endarr{\end{array}}
\def\begequ{\begin{equation}}
\def\endequ{\end{equation}}
\def\lab{\label}
\def\begdes{\begin{description}}
\def\enddes{\end{description}}
\def\begenu{\begin{enumerate}}
\def\begite{\begin{itemize}}
\def\endite{\end{itemize}}
\def\endenu{\end{enumerate}}
\def\lef[{\left[\begin{array}}
\def\rig]{\end{array}\right]}
\def\qed{\hfill$\Box \Box \Box$}
\def\begcen{\begin{center}}
\def\endcen{\end{center}}
\def\begrem{\begin{remark}\rm}
\def\endrem{\end{remark}}
\def\begassum{\begin{assumption}}
\def\endassum{\end{assumption}}
\def\begassums{\begin{assumption*}}
\def\endassums{\end{assumption*}}
\def\begassu{\begin{ass}}
\def\endassu{\end{ass}}
\def\beglem{\begin{lemma}}
\def\endlem{\end{lemma}}
\def\begcor{\begin{corollary}}
\def\endcor{\end{corollary}}
\def\begfac{\begin{fact}}
\def\endfac{\end{fact}}
\def\bfp{\mathbf{p}}
\journal{Systems \& Control Letters}
\begin{document}

\begin{frontmatter}



\title{Simultaneous Interconnection and Damping Assignment Passivity--based Control of Mechanical Systems Using Generalized Forces}


\author[label1]{A. Donaire\corref{cor1}}
\ead{Alejandro.Donaire@newcastle.edu.au}
\cortext[cor1]{Corresponding author at PRISMA Lab, Dipartimento di Ingegneria Elettrica e Tecnologie dell'Informazione, Universit\`a degli Studi di Napoli, via Claudio 21, 80125, Naples, Italy, Tel.: +39 817 683513.}
\author[label2]{R. Ortega}
\ead{ortega@lss.supelec.fr}
\author[label3]{J.G. Romero}
\ead{Jose.Romero-Velazquez@lirmm.fr}
\address[label1]{PRISMA Lab, University of Naples Federico II, Italy, and School of Engineering, The University of Newcastle, Australia.}
\address[label2]{Laboratoire des Signaux et Syst\`emes, CNRS--SUPELEC, Plateau du Moulon, 91192 Gif--sur--Yvette, France.}
\address[label3]{Laboratoire d'Informatique, de Robotique et de Micro\'electronique de Montpellier,  Montpellier, France.}

\begin{abstract}
To extend the realm of application of the well known controller design technique of  interconnection and damping assignment passivity--based control (IDA--PBC) of mechanical systems two modifications to the standard method are presented in this article. First, similarly to \cite{BATetal}, it is proposed to avoid the splitting of the control action into energy--shaping and damping injection terms, but instead to carry them out {\em simultaneously}. Second, motivated by \cite{CHA}, we propose to consider the inclusion of  {\em generalised forces}, going beyond the gyroscopic ones used in standard IDA--PBC. It is shown that several new controllers for mechanical systems designed invoking other (less systematic procedures) that do not satisfy the conditions of standard IDA--PBC, actually belong to this new class of SIDA--PBC.
\end{abstract}

\begin{keyword}
Stability of nonlinear systems, passivity--based control, mechanical systems.
\end{keyword}

\end{frontmatter}

\section{Introduction}
\lab{sec1}
Stabilization of underactuated mechanical systems shaping their potential energy function, and preserving the systems structure, is a simple, robust and highly successful technique first introduced in \cite{TAKARI}. To enlarge its realm of application it has been proposed to modify the kinetic energy of the system as well. This idea of total energy shaping was first introduced in \cite{AILORT} with the two main approaches being now: the method of controlled Lagrangians \cite{BLOLEOMAR} and Interconnection and Damping Assignment Passivity-Based Control (IDA--PBC) \cite{ORTtac}, see also the closely related work \cite{FUJSUG}. In both cases stabilization (of a desired equilibrium) is achieved identifying the class of systems---Lagrangian for the first method and Hamiltonian for IDA--PBC---that can possibly be obtained via feedback. The conditions under which such a feedback law exists are identified by the so--called \emph{matching equations}, which are a set of quasi-linear partial differential equations (PDEs), that are naturally split into kinetic energy (KE--PDE) and potential energy (PE--PDE).

Although a lot of research effort has been devoted to the solution of the matching equations---see \cite{CRAORTPIL,Donaire2015} for a recent survey of the existing results---this task remains the main stumbling block for the application of these methods. The solution of the KE--PDE is simplified by the inclusion of {\em gyroscopic forces} in the target dynamics, which translates into the presence of a free skew-symmetric matrix in the matching equation that reduces the number of PDEs to be solved. Due to its Hamiltonian formulation, this term is intrinsic in IDA--PBC, and was added to the original controlled Lagrangian method of \cite{BLOLEOMAR,BLOetal}---for the first time in \cite{BLAORTVAN}---and adopted later in \cite{CHAetal}. In \cite{BLAORTVAN} it is shown that the PDEs of the (extended) controlled Lagrangian method and IDA--PBC are the same, see also \cite{CHAetal}.

Recently, in \cite{CHA} it has been proposed to consider a more general form for these forces, relaxing the skew-symmetry condition. It is claimed in \cite{CHA} that the inclusion of these forces {\em reduces the number} of KE--PDEs, but as shown in \cite{CRAORTPIL} this claim turned out to be {\em wrong}. One of the objectives of this paper is to show that, even though the number of PDEs is not reduced, the inclusion of generalised forces effectively extends the realm of application of IDA--PBC. A second modification to IDA--PBC proposed in the paper is to simultaneously carry out the energy shaping and damping injection steps---instead of doing them as separate steps. This modification has been previously reported in  \cite{BATetal}, where it is shown that the partition into two steps of the design procedure induces some loss of generality. In particular, it is shown that (two--step) IDA--PBC is not applicable for the induction motor, while SIDA--PBC does apply.

In the paper we also show that several recent controller designs that do not fit in the standard IDA--PBC paradigm, actually belong to this new  class of SIDA--PBC with generalised forces. In this way, it is shown that these controllers, that were derived invoking less systematic procedures,  are obtained following the well--established SIDA--PBC methodology. 

The remaining of the paper is organized as follows. Section \ref{sec2} briefly recalls the IDA--PBC methodology.  Section \ref{sec3} contains the main result, which is the definition of SIDA--PBC with generalised forces. Two recently reported controller design techniques are shown to belong to this class in Section \ref{sec4}. The paper is wrapped--up with concluding remarks in  Section \ref{sec5}.\\

\noindent {\bf Notation.} $I_n$ is the $n \times n$ identity matrix and $0_{n \times s}$ is an
$n \times s$ matrix of zeros, $0_n$ is an $n$--dimensional column vector of zeros. Given $a_i \in \rea,\; i \in \bar n := \{1,\dots,n\}$, we denote with $\col(a_i)$ the $n$--dimensional column vector with
elements $a_i$. For any matrix $A \in \rea^{n \times n}$, $(A)_{i} \in \rea^n$ denotes the $i$--th column, $(A)^{i}$ the $i$--th row and $(A)_{ij}$ the $ij$--th element. $e_i\in \rea^n,\; i \in
\bar n$, is the Euclidean basis vectors. For $x \in \rea^n$, $S \in \rea^{n \times n}$, $S=S^\top
>0$, we denote the Euclidean norm $|x|^2:=x^\top x$, and the weighted--norm $\|x\|^2_S:=x^\top S x$. Given a function $f:  \rea^n \to \rea$ we define the differential operators
$$
\nabla_x f:=\left(\frac{\displaystyle \partial f }{\displaystyle \partial x}\right)^\top,\;\nabla_{x_i} f:=\left(\frac{\displaystyle
\partial f }{\displaystyle \partial x_i}\right)^\top,
$$
where $x_i \in \rea^p$ is an element of the vector $x$. For a mapping $g : \rea^n \to \rea^m$, its Jacobian matrix is defined as
$$\nabla g:=\left [\begin{array}{cc}(\nabla g_1)^\top \\
\vdots\\ (\nabla g_m)^\top \end{array}\right],$$ where $g_i:\rea^n \to \rea$ is the $i$-th element of $g$. When clear from the context the subindex in $\nabla$ will be omitted.  To simplify the expressions, the arguments of all mappings will be omitted, and will be explicitly written only the first time that the mapping is defined.
%
\section{Standard Interconnection and Damping Assignment PBC}
\label{sec2}
%
To make the paper self--contained a brief review of IDA--PBC is presented in this section. IDA--PBC was introduced in \cite{ORTtac} to control underactuated mechanical systems described in port--Hamiltonian (pH) form by
\begin{eqnarray}
\Sigma : \; \left[ \begin{array}{c}
                   \dot q \\
                   \dot p
                   \end{array} \right] = \left[ \begin{array}{cc}
                                               0_{n\times n}      & I_{n} \\
                                              -I_{n}  & 0_{n\times n}
                                              \end{array} \right] \nabla H(q,p)+ \left[ \begin{array}{c}
                                                                        0_{n\times m} \\
                                                                        G(q)
                                                                  \end{array}\right] \, u,
\label{sys}
\end{eqnarray}
\noindent where $q,p \in \rea^{n}$ are the generalized position and momenta, respectively, $u \in \rea^{m}$ is the control, $G \colon \rea^{n} \to \rea^{n \times m}$ with $\mbox{\rm rank}(G)= m < n$, the function $H \colon \rea^{n} \times \rea^{n} \to \rea,$
\begin{equation}
\lab{sys0}
H(q,p) := {1 \over 2} \, p^\top \,  M^{-1}(q) \, p + V(q)
\end{equation}
is the total energy with $M \colon \rea^{n}  \to \rea^{n\times n}$, the positive definite inertia matrix and $V \colon \rea^{n} \to \rea$ the potential energy. The control objective is to generate a state--feedback control  that assigns to the closed-loop the stable equilibrium  $(q,p) = (q^{\star},0)$, $q^{\star} \in \rea^{n}$. This is achieved in IDA--PBC via a two step procedure. The first one, called energy shaping, determines a state--feedback to match the pH target dynamics
\begequ
\label{sysd}
\Sigma_d :\; \lef[{c}
             \dot{q} \\
             \dot{p}
             \rig] = \lef[{cc}
                      0_{n\times n}            &  M^{-1}(q) \, M_d(q)   \\
                     -M_d(q) \, M^{-1}(q)  &  J_2(q,p)
                     \rig] \nabla H_d(q,p)
                     \endequ
with the new total energy function $H_{d} \colon \rea^{n} \times \rea^{n} \to \rea,$
\begin{equation}
\lab{hd}
H_{d}(q,p) := {1 \over 2} \, p^\top \, M_{d}^{-1}(q)\, p + V_{d}(q),
\end{equation}
where $M_{d} \colon \rea^{n}  \to \rea^{n \times n}$ is positive definite,  {\bf $V_{d} \colon \rea^{n} \to \rea$} verifies
\begequ
\lab{vdmin}
q_{\star}   =  \arg \min V_{d}(q),
\endequ
and $J_{2} \colon \rea^{n} \times \rea^{n} \to \rea^{n \times n}$ fulfills the skew--symmetry condition
\begequ
\lab{j2}
J_{2}(q,p)  =  -J_{2}^\top(q,p).
\endequ
In this case, $(q^{\star},0)$ is a stable equilibrium point of (\ref{sysd}) with Lyapunov function $H_{d}.$ Indeed, the time derivative of $H_{d}$ along the
trajectories of \eqref{sysd} takes the form
$$
\dot{H}_{d} =  p^\top \, M_d^{-1} \, J_{2} \, M_{d}^{-1} \, p \equiv 0.
$$
The second step, called -damping injection, is aimed at achieving asymptotic stability. This step is carried out feeding back the natural passive output, that is, adding to the energy shaping control a term of the form $-K_p G^\top M_d^{-1}p$, with $K_P \in \rea^{n \times n}$ positive definite. With this new term we get
$$
\dot{H}_{d} = -\| G^\top M_d^{-1}p\|_{K_P}^2 \leq 0.
$$
Asymptotic stability follows if  the output $G^\top M_d^{-1}p$ is detectable \cite{VANbook}.

To determine the energy--shaping control we equate the right-hand sides of \eqref{sys} and \eqref{sysd} to obtain the so--called matching equations
\begequ
\lab{matequ0}
\nabla_{q} H - G \, u  = M_{d} \, M^{-1} \, \nabla_{q} H_{d} - J_{2} \, M_{d}^{-1} \,p.
\endequ
As shown in \cite{ORTtac} these equations are equivalent to the solution of the ($p$-dependent) KE--PDE
\begequ
G^\perp \left\{  \nabla_q(p^\top  M^{-1} p) - M_{d} \, M^{-1} \, \nabla_q(p^\top   M_{d}^{-1} p)  +  2 \, J_{2} \, M_{d}^{-1} \, p \right \} = 0_{s}, \label{pde1}
\endequ
the ($p$-independent) PE--PDE
\begequ
\lab{pde2}
G^\perp \{ \nabla{V} - M_{d} \, M^{-1} \, \nabla{V_d} \} = 0_{s},
\endequ
and the (univocally defined) control
\begin{equation}
u =  (G^\top \, G)^{-1} \, G^\top \left[ \nabla_{q} H - M_{d} \, M^{-1} \, \nabla_{q}{H_d} + J_2 \, M_d^{-1} \, p \right],
\label{u}
\end{equation}
where $G^{\perp} \colon \rea^{n} \to \rea^{s \times n},\;s:=n-m$ is a full rank left annihilator of $\;G$, {\em i.e.}, $ G^{\perp} G=0_{s\times m}$ and $\mbox{\rm rank}(G^\perp)=s$.

The success of IDA--PBC relies on the possibility of solving the PDEs (\ref{pde1}) and (\ref{pde2}). As shown below, the inclusion of generalised forces affects only the KE--PDE (\ref{pde1}),  therefore in the sequel we concentrate our attention on the KE-PDE \eqref{pde1}. In \cite{CRAORTPIL} a more explicit expression for this equation is obtained as follows. First, note that to be consistent with (\ref{pde1}), whose remaining terms are quadratic in $p,$ the free matrix $J_2$ {\em must be linear} in $p$. Hence,  without loss of generality we can take $J_2$ of the form
\begequ
\label{defJ2}
J_{2}(q,p) = \sum_{i=1}^{n} e_i^\top M_{d}^{-1}p \, U_{i}(q),
\endequ
where $U_{i} \colon \rea^{n} \to \rea^{n \times n}$ verify $U_{i}(q) = -U_{i}^\top(q)$ . To streamline the presentation of the result of  \cite{CRAORTPIL} we denote the columns of $G^\perp$ as
$$
G^{\perp} (q)=: \begin{bmatrix}
                v_{1}^\top(q) \\
                \vdots \\
                v_{s}^\top (q)
                \end{bmatrix},
$$
where $v_k:\rea^n \to \rea^n,\;k \in \bar s:=\{1,\dots,s\}$ is given by $v_{k} := \col( v_{ki}).$ Also, we introduce the mappings
$$
A_{k}: \rea^n \to  \rea^{n \times n}, \; B_{k}: \rea^n \to \rea^{n \times n},\;\Gamma_{k j}: \rea^n \to   \rea,\;W_{k}: \rea^n \to  \rea^{n \times n}.
$$
as
\begequarrs
A_{k} &  := & M_{d} \,\left (\sum_{i=1}^{n} v_{k i} \, \nabla_{q_i} M^{-1} \right ) M_{d},\;k \in \bar s \\
\Gamma_{k j} & := & \sum_{i=1}^{n}  v_{k i} \, (M_{d} \, M^{-1})_{ij},\;k \in \bar s,\; j \in \bar n:=\{1,\dots,n\} \\
B_{k} & := & M_{d} \,\left ( \sum_{i=1}^{n} \,\Gamma_{k i}  \, \nabla_{q_i} M_{d}^{-1} \right ) \, M_{d},\;k \in \bar s \\
W_{k} & := & \begin{bmatrix}
                  v_{k}^\top \, U_{1} \\
                  \vdots          \\
                  v_{k}^\top \, U_{n}
                  \end{bmatrix} + \begin{bmatrix}
                                  v_{k}^\top \, U_{1} \\
                                  \vdots          \\
                                  v_{k}^\top\, U_{n}
                                  \end{bmatrix}^\top,\;k \in \bar s.
\endequarrs

The proof of the lemma below is given in   \cite{CRAORTPIL}.
\begin{lemma}\em
\lab{lem1}
The KE--PDE \eqref{pde1} is equivalent to the PDEs
\begequ
\lab{ke}
B_k(q)-A_k(q)=W_k(q),\;k \in \bar s.
\endequ
\end{lemma}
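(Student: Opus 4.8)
The plan is to exploit that the KE--PDE \eqref{pde1} is, for each fixed $k$, a single scalar identity in $p$, obtained by reading off its $k$-th row $v_k^\top\{\cdots\}=0$, and that every term inside the braces is quadratic in $p$. I would rewrite each of the three terms as a quadratic form $p^\top(\,\cdot\,)p$, collect them into one matrix, and then invoke the elementary fact that a \emph{symmetric} matrix $S$ satisfies $p^\top S p=0$ for all $p$ if and only if $S=0$. A congruence by the invertible $M_d$ at the end converts the resulting matrix identity into the stated form $B_k-A_k=W_k$.

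Concretely, first I would use the chain rule $\partial_{q_i}(p^\top N p)=p^\top(\nabla_{q_i}N)p$ for symmetric $N$, so that the first term gives $v_k^\top\nabla_q(p^\top M^{-1}p)=p^\top\big(\sum_i v_{ki}\nabla_{q_i}M^{-1}\big)p$. For the second term I would note that the $j$-th entry of the row $v_k^\top M_d M^{-1}$ is exactly $\Gamma_{kj}$, giving $-v_k^\top M_d M^{-1}\nabla_q(p^\top M_d^{-1}p)=-p^\top\big(\sum_j\Gamma_{kj}\nabla_{q_j}M_d^{-1}\big)p$. The delicate term is the generalised--force contribution: substituting \eqref{defJ2} and writing $\eta:=M_d^{-1}p$, I would obtain $2v_k^\top J_2 M_d^{-1}p=2\eta^\top C_k\eta$, where $C_k$ is the matrix whose $i$-th row is $v_k^\top U_i$. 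Since a scalar equals its transpose, $2\eta^\top C_k\eta=\eta^\top(C_k+C_k^\top)\eta=\eta^\top W_k\eta=p^\top M_d^{-1}W_k M_d^{-1}p$, which is precisely how $W_k$ enters.

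Putting the three pieces together, the $k$-th row of \eqref{pde1} reads $p^\top S_k p=0$ for all $p$, with $S_k:=\sum_i v_{ki}\nabla_{q_i}M^{-1}-\sum_j\Gamma_{kj}\nabla_{q_j}M_d^{-1}+M_d^{-1}W_k M_d^{-1}$. The observation that makes the argument clean is that $S_k$ is \emph{symmetric}: $\nabla_{q_i}M^{-1}$ and $\nabla_{q_j}M_d^{-1}$ are symmetric as derivatives of symmetric matrices, and $W_k$ is symmetric by construction. Hence $p^\top S_k p\equiv 0$ forces $S_k=0$, and pre- and post-multiplying by $M_d$ turns the first two summands into $A_k$ and $-B_k$, leaving $A_k-B_k+W_k=0$, i.e. $B_k-A_k=W_k$. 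Because every step used — reading off rows, the vanishing of a symmetric quadratic form, and congruence by the invertible $M_d$ — is reversible, this establishes the equivalence in both directions.

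The main obstacle I anticipate is purely the bookkeeping in the $J_2$ term: one must substitute \eqref{defJ2} correctly, recognise that the factor $e_i^\top M_d^{-1}p$ supplies the $i$-th component of $\eta=M_d^{-1}p$ and hence builds the quadratic form $\eta^\top C_k\eta$, and then see that the doubling together with symmetrisation reproduces exactly $W_k$. Verifying the symmetry of $S_k$ — in particular that only the symmetric combination $C_k+C_k^\top$ survives despite each $U_i$ being skew--symmetric — is the step where an error would most easily creep in, and it is precisely what guarantees that no information is lost in passing from the scalar PDE to the matrix identity.
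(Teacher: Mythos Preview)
Your argument is correct and complete: extracting the $k$-th scalar row, writing each term as a quadratic form in $p$ (or in $\eta=M_d^{-1}p$ for the $J_2$ term), symmetrising to obtain $W_k$, and then invoking $p^\top S_k p\equiv 0\Leftrightarrow S_k=0$ for symmetric $S_k$ followed by the congruence with $M_d$ is exactly the natural route, and all steps are indeed reversible. Note that the paper does not give its own proof of this lemma---it simply cites \cite{CRAORTPIL}---so there is no in-paper argument to compare against; your derivation is the standard one and matches what that reference establishes.
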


Note that  the left-hand-side of \eqref{ke} is a function of the unknown matrix $M_d$ (and partial derivatives of its components), while the right-hand-side of \eqref{ke} is {\em independent} of the unknown matrix $M_d$ (and partial derivatives of its components). Hence the number of free elements on the right-hand-side of \eqref{ke} entirely determines the number of KE--PDE's to be solved. It is shown in  \cite{CRAORTPIL} that this number equals
\begequ
\lab{numpde}
 \displaystyle{\frac{1}{6}\,s\,(s+1)\,(s+2)}.
\endequ
Also, contrary to the claim in \cite{CHA},  the {\em explicit formula} \eqref{ke}---given in a different form also in \cite{ACOetal}---shows that there is no {\em ansatz} for the determination of $J_2$ in IDA--PBC.

%
\section{Simultaneous IDA--PBC with Generalized Forces}
\label{sec3}
%
In this paper, motivated by \cite{CHA}, we investigate the possibility to extending the realm of application of IDA--PBC  by considering more general external forces. In \cite{CHA} it is proposed to replace the target dynamics $\Sigma_d$ in \eqref{sysd} by
\begequ
\label{tardyncha}
\Sigma_T :\; \lef[{c}
                  \dot{q} \\
                  \dot{p}
                  \rig] = \lef[{cc}
                              0_{n\times n}             &  M^{-1} (q)\, M_d(q) \\
                           -M_d (q)\, M^{-1}(q)    &  0_{n\times n}
                          \rig]    \nabla H_d (q,p)+ \lef[{c}
                                         0 \\
                                         C(q,p)
                                        \rig],
\endequ
where $C \colon \rea^n \times \rea^n \to \rea^n$ is a mapping to be defined. Notice that, to ensure $H_d$ is a Lyapunov function of the closed--loop---{\em i.e.}, $\dot H_d \leq 0$---the mapping $C$ should satisfy
\begequ
\lab{pmdmin}
p^{{\top} }M_d^{-1} C \leq 0.
\endequ
Since $\Sigma_T$ and $\Sigma_d$ coincide for the particular choice $C = J_2 \, M_d^{-1}p$, it is clear that considering these more general forces enlarges the set of desired closed--loop dynamics.

The matching equation now takes the form
\begequ
\lab{matequ1}
-\hal \, \nabla_q(p^\top \,M^{-1}\,p)- \nabla V + G\, u = - M_d \, M^{-1} \, \left[\hal \, \nabla_q(p^\top  \, M_d^{-1}\, p) + \nabla V_d \right] + C,
\endequ
the KE--PDE \eqref{pde1} becomes
\begequarr
G^\perp \big\{  \nabla_q(p^\top \, M^{-1}\,p) - M_d \, M^{-1}\, \nabla_q(p^\top \, M_d^{-1} \, p) + 2 \, C \big\} = 0_{s}, 
\label{kepde}
\endequarr
while the PE--PDE \eqref{pde2} remains unchanged. Stemming from the equation above we have two important observations regarding $C$ .
\begenu
\item[{\bf O1.}] Since $C(q,0)=0_n$ must be satisfied, $C$ can always be expressed in the form
$$
C(q,p)=\Lambda(q,p)M_d^{-1}(q)p,
$$
for some mapping $\Lambda:\rea^n \times \rea^n \to \rea^{n \times n}$.
\item[{\bf O2.}] $C$ must be quadratic in $p$---this in contrast to the case of $J_2$ that is linear in $p$. For convenience, and without loss of generality, we take it of the form
$$
2 \, C(q,p) = \sum_{i=1}^{n}  \left ( p^\top M_d^{-1}(q)\, Q_{i}(q) \, M_d^{-1}(q)p \right ) \, e_{i}
$$
with $Q_i:\rea^n \to \rea^{n \times n}$ {\em free matrices}. Consequently, we have
\begequ
\lab{lam}
\Lambda(q,p):=\hal \sum_{i=1}^{n} e_i  p^\top M_d^{-1}(q)\, Q_{i}(q).
\endequ
\endenu

Two consequences of the remarks above are, on one hand, that the target dynamics $\Sigma_T$ can be written in the familiar form
\begequ
\lab{sigt}
\Sigma_T :\; \lef[{c}
                  \dot{q} \\
                  \dot{p}
                  \rig] = \lef[{cc}
                              0_{n\times n}              &  M^{-1} \, M_d \\
                           -M_d \, M^{-1}    &  \Lambda
                                                     \rig]  \nabla H_d ,
\endequ
and the stability condition \eqref{pmdmin} now becomes
\begequ
\lab{symlam}
p^\top M_d^{-1}(q) \Lambda(q,p) M_d^{-1}(q)p \leq 0.
\endequ
A sufficient, but not necessary, condition for \eqref{symlam} to hold is clearly
$$
\Lambda + \Lambda^\top \leq 0.
$$

Notice that, in contrast with the two step design procedure of standard IDA--PBC, in this new formulation the energy shaping and the damping injection are carried out {\em simultaneously}. This is in the spirit of \cite{BATetal} where it is shown that the partition into two steps of the design procedure induces some loss of generality.

On the other hand, it is easy to see (see \cite{CRAORTPIL}), that new KE--PDE becomes
\begequ
\sum_{i=1}^{n} \left[ (v_k^\top \, M_d  \, M^{-1} e_i) \, {\nabla_{q_{i}} M_d } - (v_k^\top \, e_{i}) \, M_{d} \, {\nabla_{q_{i}}M^{-1}} \, M_d \right] = - \sum_{i=1}^{n}  e_{i} \, v_k^\top \, Q_{i}(q),
\label{pde3}
\endequ
with $\;k \in \bar s$, and the control law takes the form
\begin{equation}
u =  (G^\top \, G)^{-1} \, G^\top \left[ \nabla_{q} H - M_{d} \, M^{-1} \, \nabla_{q}{H_d} + \Lambda \, M_d^{-1} \, p \right].
\label{usida}
\end{equation}
Similarly to classical IDA--PBC, the presence of the matrices $Q_i$ allows us to reduce the number of PDE's to be solved. Interestingly, this is equal to \eqref{numpde}, that is, the number of PDEs of IDA--PBC; see \cite{CHA}. In spite of this fact, we show in the next section---via a series of examples---that SIDA--PBC with generalised forces is applicable to a larger class of systems than standard IDA--PBC.

We wrap--up this section with a simple proposition that summarises the developments presented above and whose proof follows {\em verbatim} the proof of stability of standard IDA--PBC \cite{ORTtac}.
\begin{proposition}\em
\label{pro1}
Consider the underactuated mechanical system \eqref{sys} in closed--loop with the control \eqref{usida} verifying the following conditions.
\begenu
\item $H_d$  and $\Lambda$ are given by \eqref{hd} and \eqref{lam}, respectively.
\item $M_d$ and $\Lambda$ satisfy \eqref{symlam}.
\item $M_d,\;V_d$  and $Q$ verify the matching equations \eqref{pde2} and \eqref{pde3}.
\item$M_d$ is positive definite and $V_d$ satisfies \eqref{vdmin}.
\endenu
The closed--loop system takes the form \eqref{sigt} and it has a {\em globally stable} equilibrium at the desired point $(q,p)=(q_\star,0)$, with Lyapunov function $H_d$. The equilibrium is globally {\em asymptotically} stable if
$$
y_D:=(\Lambda+\Lambda^\top)^\hal  M_d^{-1}p
$$
is a detectable output of the closed--loop system.
\end{proposition}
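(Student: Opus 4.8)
The plan is to run the standard Lyapunov-plus-LaSalle argument for IDA--PBC, the only structural difference being that the conservative and dissipative effects are now carried jointly by the single block $\Lambda$ instead of being split between a skew--symmetric $J_2$ and an added damping term; because of this the stability proof of \cite{ORTtac} transfers almost verbatim.

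First I would check that the closed loop is exactly \eqref{sigt}. Substituting \eqref{usida} into the lower block of \eqref{sys}, the factor $G(G^\top G)^{-1}G^\top$ is the orthogonal projector onto $\mathrm{range}(G)$, so the control cancels precisely the component of the matching equation \eqref{matequ1} lying in $\mathrm{range}(G)$. The complementary component, obtained by premultiplying \eqref{matequ1} by $G^\perp$, is exactly the KE--PDE \eqref{kepde} together with the PE--PDE \eqref{pde2}; by Lemma~\ref{lem1} and the reduction leading to \eqref{pde3}, these hold because $M_d,V_d,Q$ satisfy condition (iii). Hence both sides of \eqref{matequ1} agree and the closed loop coincides with the target \eqref{sigt}.

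Second, I would verify that $H_d$ in \eqref{hd} is a genuine Lyapunov function. Since $M_d>0$ by condition (iv), the kinetic term $\tfrac12 p^\top M_d^{-1}p$ is positive definite in $p$ and vanishes only at $p=0$, and by \eqref{vdmin} we have $\nabla V_d(q_\star)=0$, so $\nabla H_d(q_\star,0)=0$ and $(q_\star,0)$ is a strict minimizer of $H_d$ and an equilibrium of \eqref{sigt}. Differentiating $H_d$ along \eqref{sigt}, the off--diagonal blocks $M^{-1}M_d$ and $-M_dM^{-1}$ are negative transposes of one another, so their contributions to the scalar $\dot H_d$ cancel, leaving
$$\dot H_d=(\nabla_p H_d)^\top\Lambda\,\nabla_p H_d=p^\top M_d^{-1}\Lambda M_d^{-1}p\le 0,$$
where the inequality is condition (ii), i.e. \eqref{symlam}. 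Stability of $(q_\star,0)$ follows at once, and globality follows once $H_d$ is radially unbounded, which holds whenever $V_d$ is proper at its minimum.

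Third, for the asymptotic statement I would apply LaSalle's invariance principle. Only the symmetric part of $\Lambda$ survives in $\dot H_d$, and since $\Lambda+\Lambda^\top\le 0$ one may factor $-(\Lambda+\Lambda^\top)\ge 0$ through the square root appearing in the statement, so that $\dot H_d=-\tfrac12|y_D|^2$ with $y_D=(\Lambda+\Lambda^\top)^{\hal}M_d^{-1}p$; in particular $\{\dot H_d=0\}=\{y_D=0\}$. Trajectories are bounded (from $\dot H_d\le 0$ and radial unboundedness of $H_d$), so LaSalle applies globally and every solution converges to the largest invariant set contained in $\{y_D=0\}$; detectability of $y_D$ then forces this set to collapse to $(q_\star,0)$, giving global asymptotic stability. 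All the individual computations are routine; the only step that requires genuine care is the equivalence used in the first paragraph---that condition (iii) is exactly what makes $G^\perp$ annihilate \eqref{matequ1}---which I would import from Lemma~\ref{lem1} and the derivation preceding \eqref{pde3}. The single conceptual novelty relative to textbook IDA--PBC is that one $\dot H_d$ computation now delivers simultaneously the $J_2$--type (conservative) and damping (dissipative) effects, which is precisely the ``simultaneous'' feature this proposition records.
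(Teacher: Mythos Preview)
Your proposal is correct and matches the paper's approach: the paper does not write out a proof at all but simply remarks that it ``follows \emph{verbatim} the proof of stability of standard IDA--PBC \cite{ORTtac}'', which is precisely the Lyapunov-plus-LaSalle argument you have spelled out. Two minor points: your appeal to Lemma~\ref{lem1} is slightly misplaced, since that lemma concerns the $J_2$-based KE--PDE \eqref{pde1} rather than \eqref{kepde}---the equivalence you need between \eqref{kepde} and \eqref{pde3} is the separate derivation immediately preceding \eqref{pde3}; and your factorisation of $\dot H_d$ through $y_D$ tacitly uses $\Lambda+\Lambda^\top\le 0$, which the paper notes is sufficient but not strictly implied by \eqref{symlam} alone (since $\Lambda$ depends on $p$), so the square root in the statement is to be read under that standing convention.
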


%
\section{Examples of SIDA--PBC with Generalised Forces}
\label{sec4}
In this section we prove that several stabilising controllers for mechanical systems---that have been derived invoking other considerations---actually belong to the class of  SIDA--PBC with generalised forces presented in the previous section. More precisely, we prove that replacing the aforementioned state--feedback laws in the  system \eqref{sys} yields the desired target dynamics \eqref{sigt}, {\em i.e.}, that the matching equation \eqref{matequ1} holds.

The definition below is instrumental to articulate our results.

\begin{definition}\em
\lab{def1}
A state--feedback control law $u: \rea^n \times \rea^n \to \rea^m$  for the mechanical system \eqref{sys} is said to  be a  SIDA--PBC with generalised forces if the following identity holds true
\begequ
\lab{matequ}
- \nabla_q H(q,p) + G(q) u(q,p) = -M_d(q) M^{-1}(q) \nabla_q H_d(q,p) + \Lambda(q,p)M_d^{-1}(q) p
\endequ
where  $H_d$ is of the form \eqref{hd}, for some $M_d$ positive definite and $V_d$, $\Lambda$ verifying \eqref{vdmin} and  \eqref{symlam}, respectively. Such controllers ensure that the closed--loop system takes the pH form \eqref{sigt} and verify the conditions of Proposition \ref{pro1}.
\end{definition}
%
\subsection{Energy--shaping without solving PDEs: The controller of  \cite{Donaire2015}}
\lab{example1}
%
In \cite{Donaire2015}  a static state--feedback that assigns the Lyapunov function $H_d$  \eqref{hd}  for a class of mechanical systems was given. This control law does not satisfy the matching equation \eqref{matequ0}, therefore is not an IDA--PBC. However,  we show in this subsection that it does satisfy \eqref{matequ}---proving that it belongs to the class of  SIDA--PBC with generalised forces.

The design of \cite{Donaire2015} proceeds in two steps, first, a partial feedback linearization inner loop is applied to transform the system into Spong's Normal Form \cite{Spong1994}. Invoking Proposition 7 of \cite{Sarras2013}, conditions on $M$ and $V$ are imposed to ensure the partially linearized system is still a mechanical system. A consequence of the latter is the identification of two new cyclo--passive outputs based upon which the controller is designed in a second step. The derivations in \cite{Donaire2015} are done in the Lagrangian form, to fit it into the framework of this paper, we present below its pH formulation.

Consider a mechanical system \eqref{sys} with input matrix of the form
$$
G=\lef[{c} I_{m} \\ 0_{s \times m} \rig].
$$
Partition the generalised coordinates as $q=\col(q_a,q_u) $, with $q_a \in \RE^m$ and $q_u \in \RE^{s}$, which correspond to the actuated and unactuated coordinates, respectively. The inertia matrix is conformally partitioned as
 \begequarr
 M (q)= \left[ \begarr{cc} m_{aa} (q) & m_{au} (q) \\ m_{au}^\top (q) & m_{uu}  (q) \endarr \right],
 \endequarr
 where $m_{aa}: \rea^n \to \RE^{m\times m}$, $m_{au}:\rea^n \to  \RE^{m\times s}$ and $m_{uu}: \rea^n \to  \in \RE^{s\times s}$.

In Proposition 7 of \cite{Sarras2013} it is shown that the mechanical structure is preserved after partial feedback linearization if the following conditions are satisfied.
\begite
\item[{\bf A1.}] The inertia matrix depends only on the unactuated variables $q_u$, {\em i.e.},  $M(q)=M(q_u)$.
\item[{\bf A2.}] The sub--matrix $m_{aa}$ of the inertia matrix is constant.
\item[{\bf A3.}] The potential energy can be written as $V(q)=V_a(q_a)+V_u(q_u)$.
\item[{\bf A4.}] The rows of the matrix $m_{au}(q_u)$ satisfy
\begequ
\lab{maij}
\nabla_{q_{uj}} (m_{au})_k = \nabla_{q_{uk}} (m_{au})_j,\; \forall j\neq k,\; j,k \in \overline s.
\endequ
\endite
Under these conditions the system \eqref{sys} in closed--loop with the static state--feedback control law
\begequ
\lab{innoutloo}
u=u_{\tt PL}(q,p)+v,
\endequ
where $u_{\tt PL}:\rea^{n}\times \rea^n \to \rea^m$ is the partially linearizing feedback given in \cite{Spong1994}, see also Section VII of \cite{Donaire2015}, takes the pH form
\begin{eqnarray}
\label{newphsol}
\left[ \begarr{c} \dot{q} \\ \dot{\bfp} \endarr \right] = \left[ \begarr{cc} 0 & I_n \\ -I_n & 0 \endarr \right]  \nabla \tilde{H} + \left[ \begarr{c} 0 \\ \tilde{G}(q_u) \endarr \right]  v
\end{eqnarray}
$$
\tilde H(q,\bfp)=  \frac{1}{2} \bfp^\top \tilde M^{-1}(q_u) \bfp +V_u(q_u).
$$
where
$$
\tilde{M}(q_u)=\lef[{cc} I_m & 0 \\ 0 & m_{uu}(q_u)\rig],
$$
and
\begequ \label{gtilde}
\tilde{G}(q_u) := \left[ \begarr{c}  I_m \\ -m_{au}^\top(q_u)  \endarr \right].
\endequ
Notice that we have defined a new momenta via
$$
\bfp=\lef[{c} \bfp_a \\\bfp_u \rig]:=\tilde{M}(q_u)\dot{q}.
$$

To complete the controller design the following additional assumptions are made in  \cite{Donaire2015}.
\begite
\item[{\bf A5.}]  The columns of $m_{au}(q_u)$ are gradient vector fields, that is,
\begequ
\lab{colmau}
\nabla(m_{au})^i=[\nabla(m_{au})^i]^\top, \;\forall i \in \bar{m}.
\endequ
Equivalently, there exists a function $V_N:\RE^{s} \to \RE^m$ such that
\begequ
\lab{dotvn}
\dot{V}_N=-m_{au}(q_u)\dot{q}_u.
\endequ
\item[{\bf A6.}] There exist constants $k_e, k_a, k_u \in \RE$, $K_k,K_I \in \RE^{m\times m},\;K_k,K_I \geq 0$ such that the following holds.
\\ \ \\
{\bf (a)} $\det[K(q_u)]\neq 0,\;\forall q_u \in \RE^{s},$ where $K:\RE^{s} \to \RE^{m \times m}$ is defined as
\begequ
\lab{kqu}
K(q_u):= k_e I_m + k_a K_k + k_u K_k m_{au}(q_u) m_{uu}^{-1} (q_u)m_{au}^\top(q_u).
\endequ
{\bf (b)}  The matrix
\begequarr 
M_d^{-1}(q_u):= \left[ \begarr{cc} k_ek_a I_m + k_a^2 K_k &  \mathcal{X}(q_u) \\  \mathcal{X}^\top(q_u)  & \mathcal{Y}(q_u)   \endarr \right],  \label{mdpl}
\endequarr
with $\mathcal{X}(q_u)= -k_a k_uK_km_{au}(q_u)m^{-1}_{uu}(q_u)$ and $ \mathcal{Y}(q_u)=k_ek_um^{-1}_{uu}(q_u)+k_u^2m^{-1}_{uu}(q_u)m_{au}^\top(q_u) K_k m_{au}(q_u)m^{-1}_{uu}(q_u)$, is positive definite and the function
\begequarr \label{vdpl}
V_d(q):=k_ek_uV_u(q_u) + \frac 12 || k_aq_a+k_u V_N(q_u) ||^2_{K_I},
\endequarr
satisfies condition \eqref{vdmin}, and the minimum is isolated.
\endite

The following proposition is the main stabilization result of \cite{Donaire2015}.

\begin{proposition}\em
\label{pro2}
Consider the underactuated mechanical system \eqref{newphsol} with $m_{au}$, $m_{uu}$ and $V_u$ satisfying Assumptions {\bf A5} and {\bf A6}.
The control $v:\rea^{n}\times \rea^n \to \rea^m$ given by
\begequarr
\lab{vdon}
v(q,\bfp)\hspace{-3mm}&=& \hspace{-3mm} - K^{-1} \Bigg[ k_u K_k m_{au} m_{uu}^{-1} \nabla_{q_u} V_u + K_I (k_aq_a+k_uV_N) - \frac{k_u}{2} K_k m_{au}m_{uu}^{-1} \nonumber \\
&& \nabla^\top_{q_u} [m_{uu}^{-1} \bfp_u] \bfp_u + k_u K_k \nabla_q[m_{au}m_{uu}^{-1}\bfp_u] m_{uu}^{-1} \bfp_u \Bigg] - \nonumber \\
&& - K_P K^\top (k_a \bfp_a - k_u m_{au} m_{uu}^{-1} \bfp_u),
\endequarr
with $K_P > 0$ ensures that the closed--loop system has a {\em globally stable} equilibrium at the desired point $(q,\bfp)=(q_\star,0)$ with Lyapunov function
\begin{equation} \label{hdpfl}
H_d(q,\bfp)=\hal \bfp^\top M^{-1}_d\bfp+V_d(q),
\end{equation}
with $M_d^{-1}$ and $V_d$ given by \eqref{mdpl} and \eqref{vdpl}, respectively. The equilibrium is globally {\em asymptotically} stable if
$$
y_N:=k_a \bfp_a - k_u m_{au}(q_u) m_{uu}^{-1}(q_u) \bfp_u
$$
is a detectable output of the closed--loop system.
\end{proposition}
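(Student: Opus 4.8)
The plan is to prove the statement not by a fresh Lyapunov analysis, but by showing that the control $v$ in \eqref{vdon} is a SIDA--PBC with generalised forces for the partially--linearised system \eqref{newphsol}, so that the conclusion becomes an immediate corollary of Proposition \ref{pro1}. To this end I would apply Definition \ref{def1} to \eqref{newphsol} under the identifications $M\mapsto\tilde M$, $G\mapsto\tilde G$ and $V\mapsto V_u$, with $M_d$ and $V_d$ given by \eqref{mdpl} and \eqref{vdpl}. Everything then reduces to establishing the matching identity \eqref{matequ}, namely
\[
-\nabla_q \tilde H(q,\bfp) + \tilde G(q_u)\,v(q,\bfp)
= -M_d(q_u)\,\tilde M^{-1}(q_u)\,\nabla_q H_d(q,\bfp)
+ \Lambda(q,\bfp)\,M_d^{-1}(q_u)\,\bfp,
\]
where $\Lambda$ is to be read off from the computation and must be shown to satisfy \eqref{symlam}.

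Since $\tilde M^{-1}=\diag(I_m,m_{uu}^{-1})$ and $M_d^{-1}$ in \eqref{mdpl} depend only on $q_u$, I would organise the verification by the degree of homogeneity in $\bfp$, matching the three groups of terms separately. First, at degree zero (the potential part), collecting the $\bfp$--independent terms reduces the identity to $-\nabla_q V_u + \tilde G\,v_0 = -M_d\,\tilde M^{-1}\,\nabla_q V_d$, where $v_0$ is the gradient part of \eqref{vdon}. Using the definition of $K$ in \eqref{kqu}, the block structure of $M_d^{-1}$, the definition of $V_d$ in \eqref{vdpl}, and Assumption {\bf A5} (existence of $V_N$ satisfying \eqref{dotvn}), this collapses to an algebraic identity; this is precisely the statement that the generalised force $\Lambda M_d^{-1}\bfp$ vanishes at $\bfp=0$, equivalently that the PE--PDE \eqref{pde2} holds. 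Second, at degree two (the kinetic part), the momentum--quadratic terms of $\tilde G v$---those carrying $\nabla^\top_{q_u}[m_{uu}^{-1}\bfp_u]\bfp_u$ and $\nabla_q[m_{au}m_{uu}^{-1}\bfp_u]m_{uu}^{-1}\bfp_u$---must combine with $-\hal\nabla_q(\bfp^\top\tilde M^{-1}\bfp)$ and $-\hal M_d\tilde M^{-1}\nabla_q(\bfp^\top M_d^{-1}\bfp)$ to leave a residual that is the admissible generalised force \eqref{lam}; this identifies the $\bfp$--linear (gyroscopic) part of $\Lambda$, equivalently the matrices $Q_i$ solving the KE--PDE \eqref{pde3}. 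Third, at degree one (the damping part), the remaining term $\tilde G(-K_P K^\top y_N)$ is $\bfp$--linear and identifies the $\bfp$--independent part of $\Lambda$, i.e. the dissipation injected \emph{simultaneously} with the energy shaping.

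With $\Lambda$ so identified, I would verify \eqref{symlam}: the gyroscopic part contributes zero to the quadratic form $\bfp^\top M_d^{-1}\Lambda M_d^{-1}\bfp$, while the damping part yields a manifestly nonpositive contribution $-\|K^\top y_N\|^2_{K_P}\le 0$ because $K_P>0$, so the stability condition of Definition \ref{def1} holds. Positive definiteness of $M_d$ and the isolated minimum of $V_d$ at $q_\star$ are assumed in {\bf A6(b)}, hence \eqref{vdmin} is met. All hypotheses of Proposition \ref{pro1} being in force, the closed loop takes the form \eqref{sigt} and $(q_\star,0)$ is a globally stable equilibrium with Lyapunov function $H_d$, asymptotic stability following from detectability of the passive output $(\Lambda+\Lambda^\top)^{\hal}M_d^{-1}\bfp$, which here reduces, up to a constant factor, to $y_N$.

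I expect the degree--two (kinetic) step to be the main obstacle. The difficulty is that the quadratic--in--$\bfp$ mismatch between $\nabla_q(\bfp^\top\tilde M^{-1}\bfp)$ and $M_d\tilde M^{-1}\nabla_q(\bfp^\top M_d^{-1}\bfp)$ must be shown to be a genuine, non--dissipative generalised force of the form \eqref{lam}, and this hinges delicately on the structural Assumptions {\bf A1}--{\bf A5}: that $M$ depends only on $q_u$, that $m_{aa}$ is constant, and on the gradient/symmetry conditions \eqref{maij} and \eqref{colmau}. Checking that the left annihilator $\tilde G^\perp=[\,m_{au}^\top\ \ I_s\,]$ annihilates this residual---so that only the projectable part survives and is absorbed into $\Lambda$---is where the bookkeeping of partial derivatives is heaviest.
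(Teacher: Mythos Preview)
Your approach is correct and coincides with what the paper does, though the paper does not give a self--contained proof of Proposition~\ref{pro2}: it cites the result from \cite{Donaire2015}, and then separately proves Proposition~\ref{pro3} (in Appendix~\ref{appproofprop3}), which is exactly the statement that $v$ in \eqref{vdon} is a SIDA--PBC with generalised forces. Your plan---verify the matching identity \eqref{matequ} for the system \eqref{newphsol}, read off $\Lambda$, check \eqref{symlam}, and invoke Proposition~\ref{pro1}---is precisely that route, so your proposal is essentially the paper's own (indirect) argument for Proposition~\ref{pro2}.

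The differences are organisational rather than substantive. You split the verification by homogeneity degree in $\bfp$; the paper instead postulates the explicit $\Lambda$ of \eqref{lambdapl} up front and checks the full matching equation in one pass, relying on the algebraic identity $M_d^{-1}\tilde G=\bigl[\,k_a I_m\,;\,-k_u m_{uu}^{-1}m_{au}^\top\,\bigr]K$ (their Lemma~2, equation~\eqref{equlemma}) to collapse the computation. That lemma is the workhorse you will need at both degree~$0$ and degree~$2$, and you should expect to invoke it explicitly. For the stability condition \eqref{symlam}, the paper shows by a direct (and lengthy) expansion that the quadratic--in--$\bfp$ part of $\Lambda$ contributes zero to $\bfp^\top M_d^{-1}\Lambda M_d^{-1}\bfp$---this is the delicate cancellation you flag in your last paragraph, and it does rely on the structural assumptions {\bf A1}--{\bf A5}. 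The surviving term is $-\bfp^\top M_d^{-1}\tilde G K_P\tilde G^\top M_d^{-1}\bfp=-\|K^\top y_N\|_{K_P}^2$, which matches your conclusion; detectability of $y_D$ then reduces to detectability of $y_N$ since $K$ is everywhere invertible by {\bf A6(a)}.
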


Now, we proceed to prove that the control \eqref{vdon} is a GSIDA--PBC with generalised forces. Towards this end, we first notice that the matching equation \eqref{matequ} for the system \eqref{newphsol} reduces to
\begequarr
-\frac 12 \nabla_q (\bfp^\top \tilde M^{-1} \bfp) -  \nabla_q V +  \tilde G v \hspace{-3mm}&= &\hspace{-3mm}- \frac 12 M_d \tilde M^{-1} \nabla_q(\bfp^\top M_d^{-1} \bfp) - M_d\tilde M^{-1} \nabla_q V_d + \nonumber \\
&&\hspace{-3mm}+ \Lambda M_d^{-1} \bfp 
\lab{newmatcon}
\endequarr
Hence,  we must prove that  \eqref{vdon} verifies \eqref{newmatcon} for some $\Lambda$ satisfying \eqref{symlam}. This fact is stated in the proposition below whose proof involves a series of long computations, therefore, it is given in \ref{appproofprop3}.
\begin{proposition}\em
\label{pro3}
Consider the underactuated mechanical system \eqref{newphsol} with $m_{au}$, $m_{uu}$ and $V_u$ satisfying Assumptions {\bf A5} and {\bf A6}.
The control \eqref{vdon} is a SIDA--PBC with generalised forces and
\begequarr
\Lambda(q,\bfp) &=& \frac 12 M_d \Bigg[  - M_d^{-1} \nabla_q^\top [\tilde{M}^{-1}\bfp] + \tilde{M}^{-1} \nabla_q^\top[M_d^{-1}\bfp] -  M_d^{-1} \tilde{G} K^{-1}
\nonumber \\
&& \hspace{-20mm}  \left[ \begarr{ccc }0 & \vdots & k_u K_k m_{au}m_{uu}^{-1} \nabla^\top_{q_u} [m_{uu}^{-1} \bfp_u]  - 2 k_u K_k \nabla_q[m_{au}m_{uu}^{-1} \bfp_u] m_{uu}^{-1}  \endarr \right] \Bigg] M_d - \nonumber  \\
&&  \hspace{-20mm} - \tilde{G}(q) K_P \tilde{G}^\top(q).
\lab{lambdapl} 
\endequarr
\end{proposition}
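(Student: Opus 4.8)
The plan is to establish the identity \eqref{newmatcon} by substituting the control \eqref{vdon} into its left-hand side and reading off the residual generalized-force term, which must coincide with $\Lambda M_d^{-1}\bfp$ for the $\Lambda$ announced in \eqref{lambdapl}. The guiding observation is that \eqref{newmatcon} is an identity in $(q,\bfp)$ whose two sides are at most \emph{quadratic} in $\bfp$: it splits into $\bfp$-independent terms (the potential-energy balance) and terms quadratic in $\bfp$ (the kinetic/gyroscopic balance), with no linear term. Since $\Lambda M_d^{-1}\bfp$ is itself quadratic in $\bfp$ by construction (cf. observations \textbf{O1}--\textbf{O2}), I would rearrange \eqref{newmatcon} to isolate $\Lambda M_d^{-1}\bfp$ and then match the two homogeneity classes in $\bfp$ separately.

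For the $\bfp$-independent part I would collect the potential terms $-\nabla_q V + \tilde G v_0 + M_d\tilde M^{-1}\nabla_q V_d$, where $v_0$ denotes the part of \eqref{vdon} that does not depend on $\bfp$ (its first bracket). Using the explicit expressions \eqref{mdpl} for $M_d^{-1}$, \eqref{vdpl} for $V_d$, \eqref{kqu} for $K$ and \eqref{gtilde} for $\tilde G$, together with Assumption \textbf{A5} in the form \eqref{dotvn} (which identifies $V_N$ as a primitive of $-m_{au}$, so that $k_a q_a + k_u V_N$ is exactly the argument appearing under $K_I$ in $V_d$), this block should collapse to zero, i.e. it reproduces the PE--PDE matching for this example. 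This is the step where the precise algebraic structure of $K$, $M_d$ and $V_d$ from Assumption \textbf{A6} is indispensable.

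The $\bfp$-quadratic part then defines $\Lambda$. Here I would use the elementary identity $\nabla_q(\bfp^\top A\,\bfp)=\nabla_q^\top[A\bfp]\,\bfp$, valid for any symmetric $A(q)$, to rewrite $-\frac{1}{2}\nabla_q(\bfp^\top\tilde M^{-1}\bfp)$ and $\frac{1}{2}M_d\tilde M^{-1}\nabla_q(\bfp^\top M_d^{-1}\bfp)$ as right-multiples of $\bfp$, matching the $\nabla_q^\top[\tilde M^{-1}\bfp]$ and $\nabla_q^\top[M_d^{-1}\bfp]$ terms in \eqref{lambdapl}. Simultaneously I would push the $\bfp$-dependent part of \eqref{vdon}---the Christoffel-type terms in $\nabla_{q_u}^\top[m_{uu}^{-1}\bfp_u]\bfp_u$ and $\nabla_q[m_{au}m_{uu}^{-1}\bfp_u]m_{uu}^{-1}\bfp_u$, together with the output-feedback term $-K_P K^\top(k_a\bfp_a-k_u m_{au}m_{uu}^{-1}\bfp_u)$---through $\tilde G$. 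The damping contribution becomes transparent once one verifies the passive-output identity $\tilde G^\top M_d^{-1}\bfp = K^\top(k_a\bfp_a-k_u m_{au}m_{uu}^{-1}\bfp_u)=K^\top y_N$, which turns $\tilde G\,K_P K^\top y_N$ into $\tilde G K_P\tilde G^\top M_d^{-1}\bfp$ and so produces the last term $-\tilde G K_P\tilde G^\top$ of \eqref{lambdapl}; factoring $M_d^{-1}\bfp$ out of the remaining quadratic terms should reproduce the bracketed expression there. Throughout, Assumptions \textbf{A1}--\textbf{A2} (so that $\tilde M$ has the block form with constant $I_m$ and $\tilde M,\,M_d$ depend only on $q_u$) and \textbf{A4}--\textbf{A5} are what allow the derivatives of $m_{au},\,m_{uu}$ to be reorganized consistently.

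Finally, to qualify as a SIDA--PBC in the sense of Definition \ref{def1}, the resulting $\Lambda$ must satisfy \eqref{symlam}. Forming $\bfp^\top M_d^{-1}\Lambda M_d^{-1}\bfp$, the two outer $M_d$ factors in \eqref{lambdapl} cancel the $M_d^{-1}$'s; the contributions from $\nabla_q^\top[\tilde M^{-1}\bfp]$ and $\nabla_q^\top[M_d^{-1}\bfp]$ give a form that is \emph{cubic} in $\bfp$ and, being of gyroscopic (workless) type, must vanish identically---indeed it has to, since an odd-degree form cannot be sign-definite---while the term $-\tilde G K_P\tilde G^\top$ yields $-\|\tilde G^\top M_d^{-1}\bfp\|_{K_P}^2\le 0$. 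Alternatively, and more economically, once the matching equation is established the closed loop is in the form \eqref{sigt}, so that $\dot H_d=\bfp^\top M_d^{-1}\Lambda M_d^{-1}\bfp$, and \eqref{symlam} is inherited directly from the stability already proved in Proposition \ref{pro2}. I expect the main obstacle to be the purely computational bookkeeping of the $\bfp$-quadratic block: reconciling the three families of gradient terms (from $\tilde M^{-1}$, from $M_d^{-1}$, and from the Christoffel terms inside $v$) after converting them to the common $\nabla_q^\top[\,\cdot\,\bfp]\,\bfp$ form---which is precisely why the detailed calculation is deferred to \ref{appproofprop3}.
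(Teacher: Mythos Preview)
Your plan is essentially the paper's own proof: verify \eqref{newmatcon} by direct substitution, organised around the key algebraic identity $M_d^{-1}\tilde G=\begin{bmatrix}k_a I_m\\-k_u m_{uu}^{-1}m_{au}^\top\end{bmatrix}K$, which the paper isolates as a separate lemma and which is precisely the transposed form of your passive-output identity $\tilde G^\top M_d^{-1}\bfp=K^\top y_N$. The paper premultiplies \eqref{newmatcon} by $M_d^{-1}$, inserts the announced $\Lambda$, and solves for $v$, arriving at \eqref{vdon}; you go in the reverse direction, but the content is identical.

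Two small corrections. First, the matching identity does carry a \emph{linear}-in-$\bfp$ term, coming from the damping block $-\tilde G K_P\tilde G^\top M_d^{-1}\bfp$ on the right and from $-K_P K^\top y_N$ in $v$ on the left; you handle it correctly later, so this is only a slip in the opening paragraph. Second, your first argument for \eqref{symlam}---that the cubic part of $\bfp^\top M_d^{-1}\Lambda M_d^{-1}\bfp$ must vanish ``since an odd-degree form cannot be sign-definite''---is circular: that conclusion presupposes the very inequality you are trying to prove. The paper establishes the vanishing of the cubic part by an explicit block computation of $\Delta_1+\Delta_2+\Delta_3$. Your alternative route, inheriting \eqref{symlam} from the Lyapunov property already proved in Proposition~\ref{pro2} via $\dot H_d=\bfp^\top M_d^{-1}\Lambda M_d^{-1}\bfp$, is a valid and more economical argument that the paper does not exploit.
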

%
%
\subsubsection*{Application to the inverted pendulum on a cart}
\lab{subsubseccp}
To illustrate Proposition \ref{pro3} we consider here the controller for classical cart--pendulum example reported in \cite{Donaire2015}. This is a $2$--dof system with potential energy given by
$$
V(q_u)=mg\ell \cos(q_u),
$$
mass matrix 
$$
M(q_u) =\lef[{cc} M_c+m &  m\ell\cos(q_u) \\  m\ell\cos(q_u) &  m\ell^2  \rig],
$$
and the input matrix is $G=\col(1,0)$, where $q_a$ is the position of the car and $q_u$ denotes the angle of the pendulum with respect to the up-right vertical position. The parameter $M_c$ is the mass of the car,  $m$ is the mass of the pendulum and $\ell$ its length. The control objective is to stabilise the up-right vertical position of the pendulum. The system satisfies assumptions {\bf A1-A4}, thus, after using a partial-feedack linearising control \eqref{innoutloo}, the dynamics can be written as in \eqref{newphsol}, with momentum vector $\bfp=\col(\bfp_a,\bfp_u)=\col(\dot{q}_a,\frac{1}{m\ell^2}\dot{q}_u)$, $m_{uu}=m\ell^2$ and $m_{au}=m\ell\cos(q_u)$.

In \cite{Donaire2015} Proposition \ref{pro2} was used to derive the (locally stabilising) controller
\begequarr
\lab{concarpen}
v &=& \frac{1}{K(q_u)} \left[ -k_uK_km\sin(q_u)  \left(\frac{1}{m^2\ell^3} \bfp_u^2  - g \cos(q_u) \right)  \right]   -  K_p K(q_u) \nonumber \\
&& \left(\bfp_a- \frac{k_u}{\ell} \cos(q_u) \bfp_u \right)
\endequarr
where $K_I=0$, $k_a=1$ and 
\begequarr
\nonumber
K(q_u) & = & k_e  +  K_k + k_u K_k m \cos^2(q_u)\\
\nonumber
M_d^{-1} (q_u)& = & \left[ \begarr{cc} k_e +  K_k & - \frac{k_u K_k}{\ell} \cos(q_u) \\  -\frac{k_u K_k}{\ell} \cos(q_u) &  \frac{k_ek_u}{m\ell^2}+\frac{k_u^2 K_k}{\ell^2}\cos^2(q_u)   \endarr \right]\\
\lab{vdcarpen}
V_d(q_u) & = & k_e k_u mg\ell \cos(q_u).
\endequarr
The conditions of Proposition \ref{pro2} are satisfied if the controller gains verify 
$$
k_e > 0,\;k_u  <  0,\; k_e+K_k+ k_uK_k m \cos^2(q_u)  <  0,
$$
for all $q_u \in (-{\pi \over 2}, {\pi \over 2})$.  

Some lengthy, but straightforward, calculations show that the control law \eqref{concarpen} satisfies the matching condition \eqref{newmatcon} with $\Lambda$, derived from \eqref{lambdapl}, given by
\begequarr 
\label{lamcp}
 \Lambda(q,\bfp) &=& \frac 12 M_d  \left[ \begarr{cc} 0 & -\frac{2k_ak_uK_k}{ml^3} \sin(q_u) \bfp_u \\ \vspace{-5mm} \\ \\ \frac{k_ak_uK_k}{ml^3} \sin(q_u) \bfp_u & \frac{k_ak_uK_k}{ml^3} \sin(q_u) \bfp_a  \endarr \right]  M_d - \nonumber \\
 && - \left[ \begarr{c} 1 \\ -m \ell \cos(q_u) \endarr \right]  K_P \left[ \begarr{cc} 1 & -m \ell \cos(q_u) \endarr \right].
\endequarr
%
\subsection{Lyapunov approach for control of underactuated mechanical systems}
\lab{subsec42}
Several works have proposed an approach using direct Lyapunov method for control design of underactuated mechanicals system (see {\em e.g.} \cite{Aguilar2009,Turker2013,White2008}). In the following, we summarise the main idea proposed in these works.

Consider a mechanical system with dynamics as follows
\begequarr
\nonumber
\dot q &=& \calm^{-1}(q) \frp  \\
\dot \frp &=& g(q) + f(q,\frp) + \calG u.
 \label{mecp}
\endequarr
This dynamics could result from a change of coordinate or a preliminary feedback (or change of coordinates) on the mechanical system \eqref{sys} that may not preserve neither Lagrangian nor Hamiltonian structure. Notice that the system \eqref{mecp} coincides with the standard mechanical system \eqref{sys} if
\begequ
\lab{newdyn}
g(q) + f(q,\frp) \equiv -\nabla_q H(q,\frp),
\endequ
and $\calm$ is the inertia matrix.

To proceed with the design, the Lyapunov function candidate
\begequarr \label{lyap}
\calH_d(q,\frp)= \frac 12 \frp^\top \calm_d^{-1}(q) \frp + \calV_d(q),
\endequarr
with $\calm_d>0$ and $q_\star= \arg \min \calV_d(q)$ is proposed.
The control law is computed to ensure that the time derivative of the \eqref{lyap} along the dynamics \eqref{mecp} is negative semidefinite. That is,
\begequarr
\dot{\calH}_d&=& \frp^\top \calm_d^{-1} \left[ g(q) + f(q,\frp) +\calG u \right] + \frac 12 \nabla_q^\top [ \frp^\top \calm_d^{-1} \frp] \calm^{-1} \frp +  \nabla^\top \calV_d \calm^{-1} \frp \nonumber \\
&&\hspace{-15mm} = \frp^\top \calm_d^{-1} \left[ g(q) + f(q,\frp) + \calG u  + \frac 12 \calm_d \calm^{-1} \nabla_q^\top [ \calm_d^{-1} \frp]  \frp + \calm_d \calm^{-1} \nabla \calV_d  \right] 
\nonumber \\
&& \hspace{-15mm} \leq  0
\label{dlyap}
\endequarr
Now, define the vector
$$
C(q,\frp):= g(q) + f(q,\frp) + \calG u  + \frac 12 \calm_d \calm^{-1} \nabla_q^\top [ \calm_d^{-1} \frp]  \frp + \calm_d \calm^{-1} \nabla \calV_d  
$$
and, recalling observation {\bf O1}, rewrite it (without loss of generality) as 
$$
C(q,\frp) =  \Lambda(q,\frp) \calm_d^{-1} \frp.
$$
Replacing the equations above in \eqref{dlyap} yields
\begequ
\lab{conlamlya}
\frp^\top \calm_d^{-1} \Lambda(q,\frp) \calm_d^{-1} \frp \le 0,
\endequ
\begin{proposition}\em
The control law obtained via the so-called direct Lyapunov approach is a SIDA-PBC with generalized forces.
\end{proposition}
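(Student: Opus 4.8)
The plan is to verify directly that the direct--Lyapunov control law fulfils the matching equation \eqref{matequ} of Definition \ref{def1}, after reconciling the two sets of notation. First I would fix the dictionary $\calm = M$, $\calm_d = M_d$, $\calV_d = V_d$, $\calH_d = H_d$, $\calG = G$ and $\frp = p$, and invoke the correspondence \eqref{newdyn} to replace $g(q)+f(q,\frp)$ by $-\nabla_q H$. Under this identification the system \eqref{mecp} is exactly the standard mechanical system \eqref{sys}, so the remaining task is purely algebraic: to show that the defining relation for $C$ rearranges into \eqref{matequ} with an admissible $\Lambda$.

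The key recognition is that the last two terms in the definition of $C$ reproduce $M_d M^{-1}\nabla_q H_d$. Since $H_d = \hal\,\frp^\top M_d^{-1}\frp + V_d$, a component--wise computation using the symmetry of $M_d^{-1}$ gives the quadratic--form gradient identity $\nabla_q(\frp^\top M_d^{-1}\frp) = \nabla_q^\top[M_d^{-1}\frp]\,\frp$, whence $\nabla_q H_d = \hal \nabla_q^\top[M_d^{-1}\frp]\,\frp + \nabla V_d$. Multiplying on the left by $M_d M^{-1}$ shows that $\hal\,\calm_d\calm^{-1}\nabla_q^\top[\calm_d^{-1}\frp]\,\frp + \calm_d\calm^{-1}\nabla\calV_d = M_d M^{-1}\nabla_q H_d$, exactly the combination already appearing in \eqref{dlyap}. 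The definition of $C$ then collapses to $C = -\nabla_q H + G u + M_d M^{-1}\nabla_q H_d$.

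Next I would use observation \textbf{O1}---applicable because $C(q,0)=0_n$---to write $C = \Lambda M_d^{-1}\frp$ and substitute this into the displayed expression for $C$. Transposing the terms yields precisely $-\nabla_q H + G u = -M_d M^{-1}\nabla_q H_d + \Lambda M_d^{-1}\frp$, which is the matching equation \eqref{matequ}. It then remains to confirm the structural requirements of Definition \ref{def1}: $H_d$ has the form \eqref{hd} by construction \eqref{lyap}; $M_d>0$ and $q_\star = \arg\min V_d$ hold by hypothesis of the Lyapunov design; and the admissibility condition \eqref{symlam} on $\Lambda$ is nothing but \eqref{conlamlya}, which the control law was built to enforce through $\dot{\calH}_d\le 0$.

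I expect no genuine obstacle: the statement is essentially a change of viewpoint, establishing that the ad hoc Lyapunov construction coincides, term by term, with the SIDA--PBC matching condition. The only points requiring a little care are the gradient identity of the second paragraph and, more conceptually, the fact that the whole argument rests on the correspondence $\calm = M$ together with \eqref{newdyn}; without it the system \eqref{mecp} need not be a mechanical system in the sense of \eqref{sys} and the identification would break down.
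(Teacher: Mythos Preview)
Your proposal is correct and follows the same algebraic route as the paper: rewrite the defining expression for $C$ as $\Lambda M_d^{-1}\frp$ via observation {\bf O1}, recognise the remaining terms as $-\calm_d\calm^{-1}\nabla_q\calH_d$, and read off the matching equation together with the stability condition \eqref{conlamlya}. The only nuance is that the paper does \emph{not} invoke \eqref{newdyn}; it keeps $g(q)+f(q,\frp)$ on the left and states that the resulting identity \eqref{matequlyap} coincides with \eqref{matequ} ``if we consider a more general class of open--loop dynamics for the momenta''---i.e., it broadens the matching equation rather than specialising the system. Your version, by contrast, restricts to the case where \eqref{newdyn} holds so that \eqref{mecp} is literally \eqref{sys}; you correctly flag this restriction in your closing paragraph, and under it your argument is complete.
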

\begin{proof}
The proof follows noting that, from the derivations above, the control law should verify
\begequarr
g(q) + f(q,\frp) + \calG u   &=& - \frac 12 \calm_d \calm^{-1} \nabla_q^\top [ \calm_d^{-1} \frp]  \frp - \calm_d \calm^{-1} \nabla \calV_d + \Lambda \calm_d^{-1} \frp \nonumber \\
 &=& -  \calm_d \calm^{-1} \nabla_q \calH_d + \Lambda(q,\frp) \calm_d^{-1} \frp, \label{matequlyap}
\endequarr
which coincides with the matching equation \eqref{matequ}, if we consider a more general class of open-loop dynamics for the momenta. This matching equation together with the stability condition \eqref{conlamlya} shows that the controller is a SIDA-PBC with generalized forces, and the closed--loop takes the form \eqref{sigt}.
\qed
\end{proof}

\subsection*{Application to the ball and beam system}
We present here the 2-dof example of the ball and beam solved in \cite{Aguilar2009} using the direct Lyapunov method, and show that the resulting controller is a SIDA-PBC with generalised forces. The design in \cite{Aguilar2009} first applies a partial-feedback linearizing control and a change of coordinate that allows us to write the dynamics of the system as follows
\begequarr
\left[ \begarr{c} \dot{q}_a \\ \dot{q}_u \endarr \right] &=&  \left[ \begarr{cc}  \frac{1}{\sqrt{2(\epsilon+q_u^2) }} & 0 \\ 0 &1 \endarr \right]  \left[ \begarr{c} \frp_a \\ \frp_u \endarr \right]  \label{dqbb} \\
\left[ \begarr{c} \dot{\frp}_a \\ \dot{\frp}_u \endarr \right] &=&  \left[ \begarr{c}  0  \\ \frac{q_u \frp_a^2}{2(\epsilon +q_u^2)} - \delta \frp_u \endarr \right] +\left[ \begarr{c} 0 \\ -\sin(q_a)  \endarr \right] + \calG u, \label{dpbb}
\endequarr
where $q_a$ is the angle of the beam and $q_u$ is the position of the ball on the beam. The momentum vector is defined as $\frp=\calm(q_u) \dot{q}$, with $\calm(q_u)=\diag(\sqrt{2(\epsilon+q_u^2)},1)$, and $\calG=\col(1,0)$. The control objective is to stabilize the equilibrium $q_\star=(0,0)$. The Lyapunov function candidate has the form \eqref{lyap} with
$$
\calm_d^{-1}(q_u)= \left[ \begarr{cc}  \sqrt{2\epsilon+q_u^2} & -\sqrt{\epsilon +q_u^2} \\ -\sqrt{\epsilon +q_u^2} & \sqrt{2\epsilon+q_u^2}  \endarr \right],
$$
and
$$
\calV_d(q)= \epsilon \sqrt{2} [1-\cos(q_a)] + \frac{K}{2} \left[ q_a - \frac{1}{\sqrt{2}} \sinh^{-1} \left( \frac{q_u}{\sqrt{2\epsilon}} \right) \right].
$$
The controller proposed in \cite{Aguilar2009} is as follows
\begequarr 
u&= &- \frac{\sqrt{2\epsilon+q_u^2}}{\sqrt{\epsilon+q_u^2}} \sin(q_a) + \frac{1}{\sqrt{\epsilon+q_u^2}}\nabla_{q_u}\calV_d - c_a \frp_a - c_u \frp_u  - \nonumber \\
&&- \left( \delta + K_P \sqrt{2\epsilon+q_u^2} \right) \frp_a + K_P \sqrt{\epsilon+q_u^2} \frp_u , \label{conbb}
\endequarr
with the functions
\begequarrs
c_u(q,\frp) & := & -\frac{q_u \frp_u}{2 \sqrt{2\epsilon+q_u^2}\sqrt{\epsilon+q_u^2}}+\frac{q_u\frp_a}{2(\epsilon+q_a^2)}\\
c_a(q,\frp) &:= & - \frac{q_u\frp_a}{2\sqrt{2\epsilon+q_u^2}\sqrt{\epsilon+q_u^2}},
\endequarrs
and parameters $K$, $K_p$ and $\epsilon$ positive constants to be chosen.

We show in \ref{appbb} that the controller \eqref{conbb} satisfies the matching equation \eqref{matequlyap} with
\begequarr
\Lambda(q,\frp) \hspace{-3mm}&:=&\hspace{-3mm} - \half \calm_d \left[ \begarr{cc} 0 & -\frac{q_u \frp_u}{\sqrt{\epsilon+q_u^2}} + \frac{\sqrt{2\epsilon+q_u^2}q_u \frp_a}{(\epsilon+q_u^2)} \\ \frac{q_u \frp_u}{\sqrt{\epsilon+q_u^2}} - \frac{\sqrt{2\epsilon+q_u^2}q_u \frp_a}{(\epsilon+q_u^2)} & 0 \endarr \right] \calm_d- \nonumber \\
&&- \frac{1}{\epsilon} \left[ \begarr{cc}  \delta \sqrt{2\epsilon+q_u^2} + \epsilon K_P & \delta \sqrt{\epsilon+q_u^2} \\  \delta \sqrt{\epsilon+q_u^2} & \delta \sqrt{2\epsilon+q_u^2} \endarr \right].
\label{lambb}
\endequarr
We now verify that the matrix $\Lambda$ defined in \eqref{lambb} satisfies the stability condition \eqref{conlamlya}. For, we notice that the first matrix in  \eqref{lambb} is skew symmetric. Now, factoring the term  $\frac{\delta}{\epsilon}$, the second matrix can be partitioned as
$$
 \left[ \begarr{cc}  \sqrt{2\epsilon+q_u^2} + \epsilon K_P &  \sqrt{\epsilon+q_u^2} \\   \sqrt{\epsilon+q_u^2} &  \sqrt{2\epsilon+q_u^2} \endarr \right] = \left[ \begarr{cc}  \sqrt{2\epsilon+q_u^2}  &  \sqrt{\epsilon+q_u^2} \\   \sqrt{\epsilon+q_u^2} &  \sqrt{2\epsilon+q_u^2} \endarr \right] + \left[ \begarr{cc}  \epsilon K_P &  0 \\ 0   &  0\endarr \right] .
 $$
This matrix is positive definite because $\delta$, $K_P$ and $\epsilon$ are positive constants and the determinant of the first right hand matrix equals $\epsilon$. Therefore, the control law \eqref{conbb} is a SIDA-PBC, and the closed--loop dynamics can be written in the form \eqref{sigt}.
%
\section{Conclusions}
\lab{sec5}
%
An extension to the well known IDA--PBC method for mechanical systems has been reported. It essentially consists of two parts: (i) allowing the presence in the target dynamics of forces, which are more general than the usual gyroscopic ones, and (ii) the proposition of simultaneously carrying out the energy shaping and damping injection steps---instead of doing them as separate steps. These two modifications have been previously reported in \cite{CHA} and \cite{BATetal}, respectively. 

It has been shown that several recent controller designs that do not fit in the standard IDA--PBC paradigm, actually belong to this new  class of SIDA--PBC with generalised forces. In this way, it is shown that these controllers, that were derived invoking less systematic procedures,  are obtained following the well--established SIDA--PBC methodology.


%





%
\appendix
\section{Proof of Proposition \ref{pro3}}
\label{appproofprop3}

We state first the following lemma, whose proof is established using straightforward calculations, that will be used below.

\begin{lemma} \em
Given the matrices $M_d^{-1}$ and $\tilde G$ as in \eqref{mdpl} and \eqref{gtilde}, respectively. The following relation holds
\begequ \label{equlemma}
M_d^{-1} \tilde G = \left[ \begarr{c} k_a I_m \\ -k_u m_{uu}^{-1} m_{au}^\top  \endarr \right] K.
\endequ
\end{lemma}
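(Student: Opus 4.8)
The plan is to establish \eqref{equlemma} by direct block multiplication, exploiting the $2\times 2$ block structure of $M_d^{-1}$ in \eqref{mdpl} together with the two-block form of $\tilde G$ in \eqref{gtilde}. Carrying out the product $M_d^{-1}\tilde G$ blockwise produces an $(m+s)\times m$ matrix whose top $m\times m$ block is $(k_e k_a I_m + k_a^2 K_k) - \mathcal{X}\, m_{au}^\top$ and whose bottom $s\times m$ block is $\mathcal{X}^\top - \mathcal{Y}\, m_{au}^\top$. I would compute these two blocks separately and verify that each equals the corresponding block of the right-hand side, namely $k_a K$ on top and $-k_u m_{uu}^{-1} m_{au}^\top K$ at the bottom.

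For the top block, I substitute $\mathcal{X} = -k_a k_u K_k m_{au} m_{uu}^{-1}$, so that $-\mathcal{X}\, m_{au}^\top = k_a k_u K_k m_{au} m_{uu}^{-1} m_{au}^\top$, and then factor $k_a$ out of the sum $k_e k_a I_m + k_a^2 K_k + k_a k_u K_k m_{au} m_{uu}^{-1} m_{au}^\top$. Comparing the resulting bracket with the definition \eqref{kqu} of $K$ immediately identifies the top block as $k_a K$, as claimed.

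For the bottom block, the key preliminary observation is that $K_k$ and $m_{uu}$ (hence $m_{uu}^{-1}$) are symmetric, so that $\mathcal{X}^\top = -k_a k_u m_{uu}^{-1} m_{au}^\top K_k$. Expanding $\mathcal{Y}\, m_{au}^\top$ using the definition of $\mathcal{Y}$ and collecting terms, I factor the common left factor $-k_u m_{uu}^{-1} m_{au}^\top$ out of $\mathcal{X}^\top - \mathcal{Y}\, m_{au}^\top$; the remaining bracket is precisely $k_e I_m + k_a K_k + k_u K_k m_{au} m_{uu}^{-1} m_{au}^\top = K$, yielding $-k_u m_{uu}^{-1} m_{au}^\top K$ and completing the verification.

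The computation is entirely elementary, so there is no genuine conceptual obstacle; the only step demanding care is the bottom block, where one must correctly transpose $\mathcal{X}$ (invoking the symmetry of $K_k$ and $m_{uu}^{-1}$) and then recognize that the two surviving terms share the common left factor $-k_u m_{uu}^{-1} m_{au}^\top$, so that the definition of $K$ can be read off. Recording these symmetry facts at the outset keeps the bookkeeping transparent and makes the factorizations in both blocks immediate.
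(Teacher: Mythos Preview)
Your proposal is correct and is precisely the ``straightforward calculations'' the paper alludes to without spelling out: block-multiply $M_d^{-1}\tilde G$, substitute the definitions of $\mathcal X$, $\mathcal Y$, and recognise the bracket $k_e I_m + k_a K_k + k_u K_k m_{au} m_{uu}^{-1} m_{au}^\top$ as $K$ in each block. The symmetry remarks you make about $K_k$ (from $K_k\ge 0$) and $m_{uu}$ (as a diagonal block of the inertia matrix) are exactly what is needed to transpose $\mathcal X$ correctly, so nothing is missing.
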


The proof of proposition \ref{pro3} is divided in two parts. First, we verify that \eqref{vdon} satisfies the matching equation \eqref{matequ}. Second, we prove that $\Lambda$ given in \eqref{lambdapl} satisfies the stability condition \eqref{symlam}.

The matching equation \eqref{matequ} for the system \eqref{newphsol} is equivalent to
\begequarr
-\frac 12 M_d^{-1} \nabla_q [\bfp^\top \tilde M^{-1} \bfp ] - M_d^{-1} \nabla_q V + M_d^{-1} \tilde G v &= &- \frac 12 \tilde M^{-1} \nabla_q[\bfp^\top M_d^{-1} \bfp] - \nonumber \\
&-& \tilde M^{-1} \nabla_q V_d +M_d^{-1} \Lambda M_d^{-1} \bfp \nonumber
\endequarr
Then, the control law should satisy
\begequarr
M_d^{-1} \tilde G v \hspace{-2mm}&=&  \frac 12 M_d^{-1} \nabla_q^\top [ \tilde M^{-1} \bfp ] \bfp + M_d^{-1} \nabla_q V  - \frac 12 \tilde M^{-1} \nabla_q^\top[ M_d^{-1} \bfp] \bfp-  \nonumber \\
&& - \tilde M^{-1} \nabla_q V_d  +M_d^{-1} \Lambda M_d^{-1} \bfp \nonumber \\
&&\hspace{-20mm}=  \frac 12 M_d^{-1} \nabla_q^\top [ \tilde M^{-1} \bfp ] \bfp + M_d^{-1} \nabla_q V  - \frac 12 \tilde M^{-1} \nabla_q^\top[ M_d^{-1} \bfp] \bfp- \tilde M^{-1} \nabla_q V_d + \nonumber \\
&&\hspace{-20mm} - \frac 12 M_d^{-1} \tilde{G} K^{-1} \left[  k_u K_k m_{au}m_{uu}^{-1} \nabla^\top_{q_u} [m_{uu}^{-1} \bfp_u]  - 2 k_u K_k \nabla_q[m_{au}m_{uu}^{-1} \bfp_u] m_{uu}^{-1}  \right] \bfp_u   - \nonumber \\
&& \hspace{-20mm} - \frac 12 M_d^{-1} \nabla_q^\top [\tilde{M}^{-1}\bfp] \bfp + \frac 12 \tilde{M}^{-1} \nabla_q^\top[M_d^{-1}\bfp]  \bfp - M_d^{-1} \tilde{G} K_P \tilde{G}^\top M_d^{-1} \bfp  \nonumber \\
&&\hspace{-20mm}=   M_d^{-1} \nabla_q V  - \tilde M^{-1} \nabla_q V_d - M_d^{-1} \tilde{G} K_P \tilde{G}^\top M_d^{-1} \bfp - \nonumber \\
&& \hspace{-20mm} - \frac 12 M_d^{-1} \tilde{G} K^{-1} \left[  k_u K_k m_{au}m_{uu}^{-1} \nabla^\top_{q_u} [m_{uu}^{-1} \bfp_u]  - 2 k_u K_k \nabla_q[m_{au}m_{uu}^{-1} \bfp_u] m_{uu}^{-1}  \right] \bfp_u   \nonumber \\
 && \hspace{-20mm} = \left[ \begarr{c} -k_a k_u K_k m_{au} m_{uu}^{-1}  \\ k_ek_um_{uu}^{-1}+k_u^{2}m_{uu}^{-1}m_{au}^\top K_k m_{au} m_{uu}^{-1}  \endarr \right]  \nabla_{q_u}V - \left[ \begarr{c} 0_{m\times s}  \\ k_ek_um_{uu}^{-1}  \endarr \right]  \nabla_{q_u}V  - \nonumber \\
 &&  \hspace{-20mm} - \left[ \begarr{c} k_a I_m  \\ -k_u m_{uu}^{-1} m_{au}^\top \endarr \right]  K_I (k_aq_1+k_uV_N) - M_d^{-1} \tilde{G} K^{-1} \frac{k_u}{2} K_k m_{au}m_{uu}^{-1} \nonumber \\
 && \hspace{-20mm} \nabla^\top_{q_u} [m_{uu}^{-1} \bfp_u] \bfp_u + M_d^{-1} \tilde{G} K^{-1} k_u K_k \nabla_q[m_{au}m_{uu}^{-1} \bfp_u] m_{uu}^{-1}  \bfp_u -M_d^{-1} \tilde{G} K_P \tilde{G}^\top M_d^{-1} \bfp  \nonumber \\
&&\hspace{-20mm}= - \left[ \begarr{c} k_a I_m  \\ -k_u m_{uu}^{-1}m_{au}^\top  \endarr \right] k_u K_k m_{au} m_{uu}^{-1} \nabla_{q_u}V  - \left[ \begarr{c} k_a I_m  \\ -k_u m_{uu}^{-1} m_{au}^\top \endarr \right]  K_I   \nonumber \\
&& \hspace{-20mm} (k_aq_1+k_uV_N) - M_d^{-1} \tilde{G} K^{-1} \frac{k_u}{2} K_k m_{au}m_{uu}^{-1} \nabla^\top_{q_u} [m_{uu}^{-1} \bfp_u] \bfp_u + M_d^{-1} \tilde{G} K^{-1} k_u  \nonumber \\
&& \hspace{-20mm} K_k \nabla_q[m_{au}m_{uu}^{-1} \bfp_u] m_{uu}^{-1}  \bfp_u - M_d^{-1} \tilde{G} K_P \tilde{G}^\top M_d^{-1} \bfp  \nonumber \\
&& \hspace{-20mm} = - M_d^{-1} \tilde{G} K^{-1} \Bigg[ k_u K_k m_{au} m_{uu}^{-1} \nabla_{q_u}V  +  K_I (k_aq_1+k_uV_N) - \frac{k_u}{2} K_k m_{au}m_{uu}^{-1}  \nonumber \\
&& \hspace{-20mm} \nabla^\top_{q_u} [m_{uu}^{-1} \bfp_u] \bfp_u + k_u K_k \nabla_q[m_{au}m_{uu}^{-1} \bfp_u] m_{uu}^{-1}  \bfp_u \Bigg]  -M_d^{-1} \tilde{G} K_P K^\top \nonumber \\
&& \hspace{-20mm} (k_a \bfp_a - k_u m_{au} m_{uu}^{-1} \bfp_u) \nonumber 
\endequarr
\begequarr
M_d^{-1} \tilde G v \hspace{-3mm} &=&    M_d^{-1} \tilde{G} \Bigg\{- K^{-1} \Bigg[ k_u K_k m_{au} m_{uu}^{-1} \nabla_{q_u}V  +  K_I (k_aq_1+k_uV_N) - \nonumber \\
&&  - \frac{k_u}{2} K_k m_{au}m_{uu}^{-1} \nabla^\top_{q_u} [m_{uu}^{-1} \bfp_u] \bfp_u +  k_u K_k \nabla_q[m_{au}m_{uu}^{-1} \bfp_u] m_{uu}^{-1}  \bfp_u \Bigg]  \nonumber \\
&&  + K_P K^\top (k_a \bfp_a - k_u m_{au} m_{uu}^{-1} \bfp_u) \Bigg \}, \label{meveri}
\endequarr
where we used the definition of $\Lambda$ in the second equality, and the relation \eqref{equlemma} in the sixth equality. The control law \eqref{vdon} exactly coincides with the term in curly brackets hence it satisfies \eqref{meveri} and, therefore, the matching equation \eqref{matequ}.

Now, to prove that $\Lambda$ given in \eqref{lambdapl} satisfies \eqref {symlam}, we compute first some terms of $\Lambda$ as follows
\begequarr
\Delta_1 &=& -  M_d^{-1} \tilde{G} K^{-1} \left[ \begarr{ccc}0_{m\times m} & \vdots & \begarr{c} k_u K_k m_{au}m_{uu}^{-1} \nabla^\top_{q_u} [m_{uu}^{-1} \bfp_u]  - \\ - 2 k_u K_k \nabla_q[m_{au}m_{uu}^{-1} \bfp_u] m_{uu}^{-1} \endarr  \endarr \right]    \nonumber \\
\Delta_2 &=&- M_d^{-1} \nabla_q^\top [\tilde{M}^{-1}\bfp]  \nonumber  \\
\Delta_3 &=& \tilde{M}^{-1} \nabla_q^\top[M_d^{-1}\bfp]   \nonumber
\endequarr
from which we obtain
\begequarr
\Delta_1 \hspace{-3mm}&=&\hspace{-4mm}  \left[ \begarr{cc}  0_{m\times m}  & -k_ak_uK_k \left[ m_{au} m_{uu}^{-1} \nabla^\top_q(m_{uu}^{-1}\bfp_u)-2\nabla_q(m_{au}m_{uu}^{-1}\bfp_u)m_{uu}^{-1} \right] \\ \vspace{-2mm} \\  0_{s\times m} & k_u^2  m_{uu}^{-1} m_{au}^\top K_k  \left[ m_{au} m_{uu}^{-1} \nabla^\top_q(m_{uu}^{-1}\bfp_u)-2\nabla_q(m_{au}m_{uu}^{-1}\bfp_u)m_{uu}^{-1} \right]  \endarr \right]  \nonumber \\ \label{delta1} \\ \nonumber \\
\Delta_2 \hspace{-3mm}&=&\hspace{-4mm} \left[ \begarr{cc} 0_{m\times m}  & k_ak_uK_k m_{au} m_{uu}^{-1} \nabla^\top_q(m_{uu}^{-1}\bfp_u) \\ \vspace{-2mm} \\ 0_{s\times m} & -k_e k_u m_{uu}^{-1} \nabla^\top_q(m_{uu}^{-1}\bfp_u) - k_u^2 m_{uu}^{-1} m_{au}^\top K_km_{au} m_{uu}^{-1} \nabla^\top_q(m_{uu}^{-1}\bfp_u) \endarr \right] \nonumber \\ \label{delta2} \\ \nonumber \\
\Delta_3 \hspace{-3mm}&=&\hspace{-4mm} \left[ \begarr{cc} 0_{m\times m}  & 0_{m\times s} \\ \vspace{-2mm} \\ \hspace{-2mm} -k_ak_um_{uu}^{-1}\nabla^\top(K_km_{au}m_{uu}^{-1}\bfp_u) & \hspace{-2mm} \begarr{c} -k_ak_um_{uu}^{-1} \nabla^\top_q(m_{uu}^{-1}m_{au}^\top K_k \bfp_a) + \\  k_e k_u m_{uu}^{-1}  \nabla^\top_q(m_{uu}^{-1}\bfp_u) + \\  k_u^2  m_{uu}^{-1}  \nabla^\top_q(m_{uu}^{-1} m_{au}^\top K_k m_{au} m_{uu}^{-1} \bfp_u) \endarr  \endarr \right].   \nonumber \\  \label{delta3}
\endequarr
Now, we compute \eqref{symlam} using \eqref{lambdapl} and  \eqref{delta1}-\eqref{delta3}
\begin{eqnarray}
&&\hspace{-8mm}\eqref{lambdapl}= \bfp^\top [\Delta_1+\Delta_2+\Delta_3- M_d^{-1}\tilde{G} K_P \tilde{G}^\top M_d^{-1}] \bfp \nonumber \\
&& \hspace{-8mm} = \left[ \begarr{c} \bfp_a \\ \bfp_u \endarr \right]^\top  \left[ \begarr{c} 0_{m\times m} \\ \vspace{-2mm}   \\ -k_ak_um_{uu}^{-1}\nabla^\top(K_km_{au}m_{uu}^{-1}\bfp_u)  \endarr \right. \nonumber \\ \nonumber \\ 
&& \hspace{-12mm} \left. \begarr{c}  2 k_a k_u K_k \nabla^\top(m_{au}m_{uu}^{-1}\bfp_u) m_{uu}^{-1} \\ \vspace{-2mm} \\ \begarr{c} -2 k_u^2  m_{uu}^{-1} m_{au}^\top K_k \nabla^\top_q(m_{au} m_{uu}^{-1}\bfp_u) m_{uu}^{-1} - \\  -k_ak_um_{uu}^{-1} \nabla^\top_q(m_{uu}^{-1}m_{au}^\top K_k \bfp_a) +  \\   + k_u^2 m_{uu}^{-1}  \nabla^\top_q(m_{uu}^{-1} m_{au}^\top K_k m_{au} m_{uu}^{-1} \bfp_u) \endarr  \endarr \right] \left[ \begarr{c} \bfp_a \\ \bfp_u \endarr \right]  - \bfp^\top  M_d^{-1}\tilde{G} K_P \tilde{G}^\top M_d^{-1} \bfp \nonumber \\
&&\hspace{-8mm}= 2k_ak_u \left[ \bfp_a^\top K_k \nabla_q(m_{au}m_{uu}^{-1}\bfp_u) m_{uu}^{-1}\bfp_u - \bfp_a^\top \nabla_q (K_k m_{au}m_{uu}^{-1}\bfp_u) m_{uu}^{-1}\bfp_u  \right] -  \nonumber \\
&&\hspace{-8mm} -k_u^2\bfp_u^\top m_{uu}^{-1} \left[ 2\nabla_q^\top(m_{au} m_{uu}^{-1} \bfp_u) K_k m_{au} m_{uu}^{-1} \bfp_u-  \nabla^\top_q(m_{uu}^{-1} m_{au}^\top K_k m_{au} m_{uu}^{-1} \bfp_u)\right] - \nonumber \\
&&\hspace{-8mm} - \bfp^\top  M_d^{-1}\tilde{G} K_P \tilde{G}^\top M_d^{-1} \bfp \nonumber \\
&&\hspace{-8mm} = - \bfp^\top  M_d^{-1}\tilde{G} K_P \tilde{G}^\top M_d^{-1} \bfp, \nonumber
\end{eqnarray}
which shows that the condition \eqref{symlam} is satisfied since $K_P>0$.
%
\section{Matching Equation \eqref{matequlyap} for the Ball and Beam.} 
\label{appbb}
From the matching equation \eqref{matequlyap}, we obtain that the control law should satisfy the following equation
\begequarrs
\calG u  \hspace{-3mm}&=&\hspace{-3mm} - g(q) - \calm_d \calm^{-1} \nabla \calV_d - f(q,\frp) - \frac 12 \calm_d \calm^{-1} \nabla_q^\top [ \calm_d^{-1} \frp]  \frp  + \Lambda(q,\frp) \calm_d^{-1} \frp \nonumber \\
&&\hspace{-10mm}= \left[ \begarr{c} 0 \\ \sin(q_a)  \endarr \right] - \calm_d \calm^{-1} \nabla \calV_d -  \left[ \begarr{cc} 0 &  0  \\ \frac{q_u \frp_a}{2(\epsilon +q_u^2)} & 0 \endarr \right] \frp +  \left[ \begarr{cc}  0 & 0  \\ 0 & \delta \endarr \right] \frp - \nonumber \\
&&\hspace{-10mm} - \frac 12 \calm_d \calm^{-1} \nabla_q^\top [ \calm_d^{-1} \frp]  \frp + \Lambda(q,\frp) \calm_d^{-1} \frp \nonumber \\
&& \hspace{-10mm}= \left[ \begarr{c} 0 \\ \sin(q_a)  \endarr \right] -  \left[ \begarr{c} - \frac{1}{\sqrt{\epsilon+q_u^2}} \nabla_{q_u} \calV_d + \frac{\sqrt{2\epsilon+q_u^2}}{\sqrt{\epsilon+q_u^2}} \sin(q_a)\\ \sin(q_a)  \endarr \right]   -  \left[ \begarr{cc} 0 &  0  \\ \frac{q_u \frp_a}{2(\epsilon +q_u^2)} & 0 \endarr \right] \frp + \nonumber \\ 
&&\hspace{-10mm}+ \left[ \begarr{cc}  0 & 0  \\ 0 & \delta \endarr \right] \frp +  \frac{1}{2}  \left[ \begarr{cc}  \frac{q_u\frp_a}{\sqrt{\epsilon+q_u^2}\sqrt{2\epsilon+q_u^2}} & -\frac{q_u\frp_a}{(\epsilon+q_u^2)} + \frac{q_u \frp_u}{\sqrt{\epsilon+q_u^2}\sqrt{2\epsilon+q_u^2}}  \\ \frac{q_u \frp_a}{(\epsilon +q_u^2)} & 0 \endarr \right]  \frp - \nonumber \\
&&\hspace{-10mm}-  \left[ \begarr{cc}  \delta + K_P \sqrt{2\epsilon+q_u^2} & - K_P \sqrt{\epsilon+q_u^2} \\  0 & \delta  \endarr \right]  \frp \nonumber \\
&&\hspace{-10mm}=  \left[ \begarr{c}  \frac{1}{\sqrt{\epsilon+q_u^2}} \nabla_{q_u} \calV_d - \frac{\sqrt{2\epsilon+q_u^2}}{\sqrt{\epsilon+q_u^2}} \sin(q_a)\\ 0  \endarr \right]  - \left[ \begarr{cc}  \delta + K_P \sqrt{2\epsilon+q_u^2} & - K_P \sqrt{\epsilon+q_u^2} \\  0 & 0  \endarr \right]  \frp   + \nonumber \\
&& \hspace{-10mm} +   \left[ \begarr{cc}  \frac{q_u\frp_a}{2\sqrt{\epsilon+q_u^2}\sqrt{2\epsilon+q_u^2}} & -\frac{q_u\frp_a}{2(\epsilon+q_u^2)} + \frac{q_u \frp_u}{2\sqrt{\epsilon+q_u^2}\sqrt{2\epsilon+q_u^2}}  \\ 0 & 0 \endarr \right]  \frp \nonumber \\
\endequarrs
\begequarrs
&&\hspace{-40mm}=  \left[ \begarr{c}  \frac{1}{\sqrt{\epsilon+q_u^2}} \nabla_{q_u} \calV_d - \frac{\sqrt{2\epsilon+q_u^2}}{\sqrt{\epsilon+q_u^2}} \sin(q_a)\\ 0  \endarr \right]    -   \left[ \begarr{cc}  c_a & c_u  \\ 0 & 0 \endarr \right]  \frp - \nonumber \\
&& \hspace{-40mm} -  \left[ \begarr{cc}  \delta + K_P \sqrt{2\epsilon+q_u^2} & - K_P \sqrt{\epsilon+q_u^2} \\  0 & 0  \endarr \right]  \frp,
\endequarrs
which is satisfied by the control law \eqref{conbb}.


\end{document}